\newtheorem{thm}{Theorem}[section]
\newtheorem{lem}[thm]{Lemma}
\newtheorem{coro}[thm]{Corollary}
\newtheorem{conj}[thm]{Conjecture}
\theoremstyle{definition}
\newtheorem{exm}{Example}[section]
\newtheorem{rem}{Remark}[section]
\numberwithin{equation}{section}
\title{Total positivity of recursive matrices}
\author{Xi Chen, Huyile Liang and Yi Wang
\thanks{{\it Email address:}\quad wangyi@dlut.edu.cn (Y. Wang)}}
\date{\footnotesize School of Mathematical Sciences, Dalian University of Technology, Dalian 116024, PR China}
\begin{document}

\maketitle

\begin{abstract}
Let $A=[a_{n,k}]_{n,k\ge 0}$ be an infinite lower triangular matrix defined by the recurrence
$$a_{0,0}=1,\quad a_{n+1,k}=r_{k}a_{n,k-1}+s_ka_{n,k}+t_{k+1}a_{n,k+1},$$
where $a_{n,k}=0$ unless $n\ge k\ge 0$ and $r_k,s_k,t_k$ are all nonnegative.
Many well-known combinatorial triangles are such matrices,
including the Pascal triangle, the Stirling triangle (of the second kind), the Bell triangle,
the Catalan triangles of Aigner and Shapiro.
We present some sufficient conditions such that the recursive matrix $A$ is totally positive.
As applications we give the total positivity of the above mentioned combinatorial triangles in a unified approach.
\bigskip
\\[1pt]
{\sl MSC:}\quad 05A20; 15B36; 15A45
\\
{\sl Keywords:}\quad Totally positive matrix; Recursive matrix; Tridiagonal matrix
\end{abstract}

\section{Introduction}
\hspace*{\parindent}

Let $A=[a_{n,k}]_{n,k\ge 0}$ be an infinite matrix.
It is called {\it totally positive of order $r$} (or shortly, {\it TP$_r$}),
if its minors of all orders $\le r$ are nonnegative.
It is called {\it TP} if its minors of all orders are nonnegative.
Let $(a_n)_{n\ge 0}$ be an infinite sequence of nonnegative numbers.
It is called a {\it P\'olya frequency sequence of order $r$} (or shortly, a {\it PF$_r$} sequence),
if its Toeplitz matrix
$$[a_{i-j}]_{i,j\ge 0}=\left[
\begin{array}{lllll}
a_{0} &  &  &  &\\
a_{1} & a_{0} & & &\\
a_{2} & a_{1} & a_{0} &  &\\
a_{3} & a_{2} & a_{1} & a_{0} &\\
\vdots & &\cdots & & \ddots\\
\end{array}
\right]$$
is TP$_r$.
It is called {\it PF} if its Toeplitz matrix is TP.
We say that a finite sequence $a_0,a_1,\ldots, a_n$ is PF$_r$ (PF, resp.)
if the corresponding infinite sequence $a_0,a_1,\ldots,a_n,0,\ldots$ is PF$_r$ (PF, resp.).
We say that a nonnegative sequence $(a_n)$ is {\it log-convex} ({\it log-concave}, resp.)
if $a_{i}a_{j+1}\ge a_{i+1}a_{j}$ ($a_{i}a_{j+1}\le a_{i+1}a_{j}$, resp.) for $0\le i<j$.
Clearly, the sequence $(a_n)$ is log-concave if and only if it is PF$_2$,
i.e., its Toeplitz matrix $[a_{i-j}]_{i,j\ge 0}$ is TP$_2$,
and the sequence is log-convex if and only if its Hankel matrix
$$[a_{i+j}]_{i,j\ge 0}=\left[
\begin{array}{lllll}
a_{0} & a_1 & a_2 & a_3 & \cdots\\
a_{1} & a_{2} & a_3 & a_4 & \cdots\\
a_{2} & a_{3} & a_{4} & a_5 & \cdots\\
a_{3} & a_{4} & a_{5} & a_{6} & \cdots\\
\vdots & \vdots &\vdots & \vdots & \ddots\\
\end{array}
\right]$$
is TP$_2$~\cite{Bre89}.

Let $\pi=(r_k)_{k\ge 1}, \sigma=(s_k)_{k\ge 0}, \tau=(t_k)_{k\ge 1}$ be three sequences of nonnegative numbers
and define an infinite lower triangular matrix
$$A:=A^{\pi,\sigma,\tau}=[a_{n,k}]_{n,k\ge 0}
=\left[\begin{array}{lllll}
a_{0,0} &  &  &  &\\
a_{1,0} & a_{1,1} & & &\\
a_{2,0} & a_{2,1} & a_{2,0} &  &\\
a_{3,0} & a_{3,1} & a_{3,2} & a_{3,3}\\
\vdots & & & & \ddots\\
\end{array}\right]$$
by the recurrence
\begin{equation}\label{rst-eq}
a_{0,0}=1,\quad a_{n+1,0}=s_0a_{n,0}+t_1a_{n,1},\quad a_{n+1,k}=r_{k}a_{n,k-1}+s_ka_{n,k}+t_{k+1}a_{n,k+1},
\end{equation}
where $a_{n,k}=0$ unless $n\ge k\ge 0$.
Following Aigner~\cite{Aig01},
we say that $A^{\pi,\sigma,\tau}$ is the {\it recursive matrix}
and $a_{n,0}$ are the {\it Catalan-like numbers}
corresponding to $(\pi,\sigma,\tau)$.
Such triangles arise often in combinatorics
and many well-known counting coefficients are the Catalan-like numbers.
The following are several basic examples of recursive matrices.
\begin{exm}
\begin{itemize}
  \item [\rm (i)]   
  The Pascal triangle $P=\left[\binom{n}{k}\right]_{n,k\ge 0}$ satisfies $\binom{n+1}{k}=\binom{n}{k-1}+\binom{n}{k}$.
  \item [\rm (ii)]
  The Stirling triangle (of the second kind) $S=[S(n,k)]_{n,k\ge 0}$ satisfies $S(n+1,k)=S(n,k-1)+(k+1)S(n,k)$.
  \item [\rm (iii)] 
  The Catalan triangle of Aigner is
  $$C=[C_{n,k}]=\left[
      \begin{array}{rrrrrr}
        1 &  &  &  &  &  \\
        1 & 1 &   &   &   &   \\
        2 & 3 & 1 &   &   &   \\
        5 & 9 & 5 & 1 &   &   \\
        14 & 28 & 20 & 7 & 1 &   \\
        \vdots &  & &  &  & \ddots \\
      \end{array}
    \right],$$
  where $C_{n+1,0}=C_{n,0}+C_{n,1}, C_{n+1,k}=C_{n,k-1}+2C_{n,k}+C_{n,k+1}$~\cite{Aig99}.
  The corresponding Catalan-like numbers $C_{n,0}$ are precisely the Catalan numbers $C_n$.
  
  The Catalan triangle of Shaprio is
  \begin{equation*}\label{ct}
  B=[B_{n,k}]=\left[
      \begin{array}{rrrrrr}
        1 &  &  &  &  &  \\
        2 & 1 &   &   &   &   \\
        5 & 4 & 1 &   &   &   \\
        14 & 14 & 6 & 1 &   &   \\
        42 & 48 & 27 & 8 & 1 &   \\
        \vdots &  & &  &  & \ddots \\
      \end{array}
    \right],
  \end{equation*}
  where $B_{n+1,k}=B_{n,k-1}+2B_{n,k}+B_{n,k+1}$~\cite{Sha76}.
  The corresponding Catalan-like numbers $B_{n,0}$ are precisely the Catalan numbers $C_{n+1}$.
  There are a lot of papers to consider combinatorics of the Catalan triangle~\cite{Aig99,Aig08,He13,Rog78,Sha76,SGWW91}.
  See also Sloane's OEIS~\cite[A039598]{OEIS}.
  \item [\rm (iv)] 
  The Bell triangle, introduced by Aigner~\cite{Aig99b}, is
  $$X=[X_{n,k}]=\left[
      \begin{array}{rrrrrr}
        1 &  &  &  &  &  \\
        1 & 1 &   &   &   &   \\
        2 & 3 & 1 &   &   &   \\
        5 & 10 & 6 & 1 &   &   \\
        15 & 37 & 31 & 10 & 1 &   \\
        \vdots &  & &  &  & \ddots \\
      \end{array}
    \right],$$
  where $X_{n+1,k}=X_{n,k-1}+(k+1)X_{n,k}+(k+1)X_{n,k+1}$.
  The corresponding Catalan-like numbers $X_{n,0}$ are the Bell numbers $B_n$.
\end{itemize}
\end{exm}

Aigner~\cite{Aig99,Aig99b,Aig01,Aig08} studied various combinatorial properties of recursive matrices
and Hankel matrices of the Catalan-like numbers.
It is well known that the Pascal triangle is TP~\cite[p. 137]{Kar68}.
Brenti~\cite{Bre95} showed, among other things, that the Stirling triangle is TP.
Very recently,
Zhu~\cite[Theorem 3.1]{Zhu13} showed that if $s_{k-1}s_{k}\ge r_kt_{k}$ for $k\ge 1$,
then the sequence $(a_{n,0})_{n\ge 0}$ of Catalan-like numbers defined by (\ref{rst-eq}) is log-convex.
Zhu~\cite[Theorem 2.1]{Zhu14} also showed that
if $r_k,s_k$ are nonnegative quadratic polynomials in $k$ and $t_k=0$ for all $k$,
then the corresponding matrix $A$ is TP.
The object of this paper is to give some sufficient conditions
for total positivity of recursive matrices.
In the next section,
we present our main results.
As applications, we show that many well-known combinatorial triangles,
including the Pascal triangle, the Stirling triangle, the Bell triangle, the Catalan triangles of Aigner and Shapiro
are TP in a certain unified approach.
As consequences, the corresponding Catalan-like numbers,
including the Catalan numbers and the Bell numbers, form a log-convex sequence respectively.
In Section 3, we point out that our results can be carried over verbatim to their $q$-analogue.
We also propose a couple of problems for further work.

\section{Main results and applications}
\hspace*{\parindent}

We first review some basic facts about TP matrices.
The first is direct by definition and
the second follows immediately from the classic Cauchy-Binet formula.

\begin{lem}\label{lps-lem}
A matrix is TP$_r$ (TP, resp.) if and only if
its leading principal submatrices are all TP$_r$ (TP, resp.).
\end{lem}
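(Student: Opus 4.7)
The statement is essentially a definitional unpacking, so my plan is to verify both implications directly from the definition of TP$_r$ with no auxiliary machinery.

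For the forward direction, I would note that if $A$ is TP$_r$ then every minor of $A$ of order $\le r$ is nonnegative. Any submatrix $B$ of $A$ has the property that each of its minors of order $\le r$ is also a minor of $A$ (with the corresponding choice of rows and columns), hence nonnegative. Applying this to the leading principal submatrices $A_N=[a_{i,j}]_{0\le i,j\le N-1}$ gives the claim.

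For the backward direction, suppose every leading principal submatrix $A_N$ is TP$_r$. Take an arbitrary minor of $A$ of order $k\le r$, determined by row indices $i_1<\cdots<i_k$ and column indices $j_1<\cdots<j_k$. Set $N=\max(i_k,j_k)+1$. Then the chosen rows and columns all lie within the first $N$ rows and columns of $A$, so the minor in question is also a minor of $A_N$. By hypothesis $A_N$ is TP$_r$, so the minor is nonnegative, and since the choice was arbitrary we conclude that $A$ is TP$_r$. The TP case follows by letting $r$ be arbitrary, since TP means TP$_r$ for every $r$.

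There is no genuine obstacle: the result is immediate once one observes that the collection of minors of order $\le r$ of $A$ equals the union over $N$ of the minors of order $\le r$ of $A_N$. I would write the argument in a few sentences and move on, essentially as the authors hint by calling it ``direct by definition.''
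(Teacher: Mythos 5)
Your proof is correct and matches the paper's intent: the authors give no proof at all, remarking only that the lemma is ``direct by definition,'' and your two-directional argument (minors of a submatrix are minors of the whole matrix; every minor of order $\le r$ lies in some leading principal submatrix) is exactly the standard unpacking they have in mind.
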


\begin{lem}\label{prod-lem}
The product of two TP$_r$ (TP, resp.) matrices is still TP$_r$ (TP, resp.).
\end{lem}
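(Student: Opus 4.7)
My plan is to apply the Cauchy-Binet formula directly. Let $C = AB$, and fix any row index set $I$ and column index set $J$, each of size $r$. Cauchy-Binet then reads
$$\det C[I,J] = \sum_K \det A[I,K] \cdot \det B[K,J],$$
with $K$ ranging over all $r$-element index sets. Because $A$ and $B$ are TP$_r$, every factor $\det A[I,K]$ and $\det B[K,J]$ is nonnegative, so each summand is nonnegative and hence $\det C[I,J] \geq 0$. Since this holds for all choices of $I$ and $J$, the matrix $C$ is TP$_r$. The TP conclusion follows by letting $r$ be arbitrary.

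The one wrinkle is that the matrices here are infinite, so Cauchy-Binet must be read as the classical finite identity applied to a sufficiently large block containing the rows indexed by $I$, the columns indexed by $J$, and every index set $K$ contributing a nonzero summand. In the setting intended by the paper the matrices are lower triangular, so $\det A[I,K]$ vanishes as soon as some element of $K$ exceeds $\max(I)$; thus only finitely many $K$ contribute and the finite Cauchy-Binet identity applies directly. I foresee no real obstacle beyond this book-keeping, since the proof reduces to the one-line observation that a sum of products of nonnegative numbers is nonnegative.
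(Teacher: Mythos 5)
Your proof is correct and is exactly the argument the paper intends: the paper simply remarks that the lemma ``follows immediately from the classic Cauchy--Binet formula'' and gives no further detail, so your expansion of that one-line justification (including the observation that only finitely many index sets $K$ contribute in the triangular setting) matches the paper's approach.
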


Rewrite the recursive relation (\ref{rst-eq}) as
$$
\left[
\begin{array}{ccccc}
a_{1,0} & a_{1,1} &  &  &  \\
a_{2,0} & a_{2,1} & a_{2,2} &  &  \\
a_{3,0} & a_{3,1} & a_{3,2} & a_{3,3} & \\
& & \cdots  & & \ddots   \\
\end{array}
\right]
=
\left[
\begin{array}{cccc}
a_{0,0} &  &  &  \\
a_{1,0} & a_{1,1} &  &  \\
a_{2,0} & a_{2,1} & a_{2,2} &  \\
& \cdots &  & \ddots   \\
\end{array}
\right]
\left[
\begin{array}{cccc}
s_0 & r_1 &  &\\
t_1 & s_1 & r_2 &\\
 & t_2 & s_2 & \ddots\\
& &\ddots & \ddots \\
\end{array}\right],
$$
or briefly,
\begin{equation}\label{AJ}
\overline{A}=AJ
\end{equation}
where $\overline{A}$ is obtained from $A$ by deleting the $0$th row and $J$ is the Jacobi matrix
\begin{equation}\label{J-eq}
J:=J^{\pi,\sigma,\tau}=\left[
\begin{array}{ccccc}
s_0 & r_1 &  &  &\\
t_1 & s_1 & r_2 &\\
 & t_2 & s_2 & r_3 &\\
 & & t_3 & s_3 & \ddots\\
& & &\ddots & \ddots \\
\end{array}\right].
\end{equation}
Clearly, the recursive relation (\ref{rst-eq}) is decided completely by the tridiagonal matrix $J$.
Call $J$ the {\it coefficient matrix} of the recursive relation (\ref{rst-eq}).
For convenience, we also call $J$ the {\it coefficient matrix} of the recursive matrix $A$.

For example, the coefficient matrix of the Bell triangle is
  $$\left[\begin{array}{ccccc}
  1 & 1 &  &  &\\
  1 & 2 & 1 &\\
  & 2 & 3 & 1 &\\
  & & 3 & 4 & \ddots\\
  & & &\ddots & \ddots \\
  \end{array}\right],$$
the coefficient matrices of Catalan triangles of Aigner and Shapiro are
  $$\left[\begin{array}{ccccc}
  1 & 1 &  &  &\\
  1 & 2 & 1 &\\
  & 1 & 2 & 1 &\\
  & & 1 & 2 & \ddots\\
  & & &\ddots & \ddots \\
  \end{array}\right]\quad \textrm{and}\quad
  \left[\begin{array}{ccccc}
  2 & 1 &  &  &\\
  1 & 2 & 1 &\\
  & 1 & 2 & 1 &\\
  & & 1 & 2 & \ddots\\
  & & &\ddots & \ddots \\
  \end{array}\right].$$

\begin{thm}\label{basic-thm}
Let $A$ be a recursive matrix with the coefficient matrix $J$.
\begin{itemize}
  \item [\rm (i)] If $J$ is TP$_r$ (TP, resp.), then so is $A$.
  \item [\rm (ii)] If $A$ is TP$_2$,
  then the sequence $(a_{n,0})_{n\ge 0}$ of the Catalan-like numbers is log-convex.
\end{itemize}
\end{thm}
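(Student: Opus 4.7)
The plan is to handle the two parts separately: part (i) reduces, via Lemmas \ref{lps-lem} and \ref{prod-lem}, to writing each leading principal submatrix of $A$ as a product of TP$_r$ matrices, while part (ii) converts log-convexity into a single $2\times 2$ minor of $A$ via the left-boundary recurrence.

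For (i), I would show by induction on $n$ that the $(n+1)\times(n+1)$ leading principal submatrix $A_n:=[a_{i,k}]_{0\le i,k\le n}$ is TP$_r$; the base $A_0=[1]$ is trivial, and Lemma \ref{lps-lem} then gives the result for $A$. For the inductive step, let $\widehat A_n$ denote $A_n$ augmented by a column of zeros on the right and let $J_{n+1}$ denote the $(n+2)\times(n+2)$ leading principal submatrix of $J$. The recurrence (\ref{rst-eq}), together with the vanishing $a_{i,n+2}=0$ for $i\le n$ (a consequence of lower-triangularity), yields the matrix identity
\[ [a_{i,k}]_{1\le i\le n+1,\,0\le k\le n+1} \;=\; \widehat A_n\cdot J_{n+1}, \]
because the tridiagonal pattern of $J$ makes the truncation of $\sum_{j} a_{i,j}J_{j,k}$ at $j=n+1$ lose nothing. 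Now $\widehat A_n$ is TP$_r$ (appending a zero column only adds vanishing minors to those already coming from $A_n$) and $J_{n+1}$ is TP$_r$ by Lemma \ref{lps-lem}, so Lemma \ref{prod-lem} makes the product TP$_r$.

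The matrix $A_{n+1}$ then sits on top of this product as the row $(1,0,\ldots,0)$. To see that prepending this row preserves TP$_r$-ness, I would split the minors of $A_{n+1}$ into three cases: those omitting row $0$ (which are minors of the product directly); those using both row $0$ and column $0$ (where cofactor expansion along row $0$ produces a $+1$ times a minor of the product, the pivot sitting in the top-left of the submatrix); and those using row $0$ but not column $0$ (where the first row of the submatrix vanishes identically). In all three cases the minor is nonnegative, closing the induction.

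For (ii), the left-boundary recurrence $a_{n+1,0}=s_0 a_{n,0}+t_1 a_{n,1}$ allows the $s_0$-terms to cancel in the relevant difference, giving
\[ a_{i,0}a_{j+1,0}-a_{i+1,0}a_{j,0} \;=\; t_1\bigl(a_{i,0}a_{j,1}-a_{i,1}a_{j,0}\bigr) \]
for $0\le i<j$; the right-hand side is $t_1\ge 0$ times the $2\times 2$ minor of $A$ on rows $i,j$ and columns $0,1$, hence nonnegative by the TP$_2$ hypothesis. The only step that requires care is the matrix identity in (i), where one has to line up the boundary indices of $\widehat A_n$ and $J_{n+1}$ so that the tridiagonal truncation drops no nonzero contribution; everything else is bookkeeping.
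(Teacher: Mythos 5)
Your proposal is correct and follows essentially the same route as the paper: part (i) is the same induction on leading principal submatrices using the factorization $\overline{A}=AJ$ together with Lemmas \ref{lps-lem} and \ref{prod-lem}, and your direct computation in part (ii) is exactly what the paper's factorization (\ref{t2-dec}) yields via Cauchy--Binet, since the only nonvanishing $2\times 2$ minor of the second factor there is $t_1$. If anything, you spell out more carefully than the paper the bordering step (prepending the row $(1,0,\ldots,0)$ and absorbing the last column into the product), which the paper compresses into ``it follows that $A_{n+1}$ is TP$_r$.''
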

\begin{proof}
(i)\quad
Clearly, it suffices to consider the TP$_r$ case.
Let
$$
A_n=
\left[\begin{array}{cccc}
a_{0,0} &  &  &\\
a_{1,0} & a_{1,1} &  &\\
\vdots & \vdots & \ddots  & \\
a_{n,0}& a_{n,1} &\cdots& a_{n,n} \\
\end{array}\right],\qquad
J_n=
\left[\begin{array}{cccc}
s_0 & r_1 &  &  \\
t_1 & s_1 & \ddots & \\
&\ddots & \ddots  &  r_n  \\
& & t_n & s_n \\
\end{array}\right]$$
and
$$\overline{A}_{n+1}=
\left[\begin{array}{ccccc}
a_{1,0} & a_{1,1} &  &  &\\
a_{2,0} & a_{2,1} & a_{2,2} &  &\\
\cdots & \cdots & \cdots & \ddots  & \\
a_{n,0} & a_{n,1} & a_{n,2} & \cdots & a_{n,n}\\
a_{n+1,0} & a_{n+1,1} & a_{n+1,2} &\cdots & a_{n+1,n}\\
\end{array}\right]$$
be the $n$th leading principal submatrices of $A, J$ and $\overline{A}$ respectively.
Then $\overline{A}_{n+1}=A_nJ_n$ by (\ref{AJ}).
Now $J$ is TP$_r$, so is $J_n$.
Assume that $A_n$ is TP$_r$.
Then the product $\overline{A}_{n+1}=A_nJ_n$ is also TP$_r$.
It follows that $A_{n+1}$ is TP$_r$.
Thus $A$ is TP$_r$ by induction.

(ii)\quad
By (\ref{rst-eq}), we have
\begin{eqnarray}\label{t2-dec}
\left[\begin{array}{cc}
a_{0,0} & a_{1,0}\\
a_{1,0} & a_{2,0}\\
a_{2,0} & a_{3,0}\\
\vdots & \vdots\\
\end{array}
\right]=
\left[
\begin{array}{ccccc}
a_{0,0} & & & & \\
a_{1,0} & a_{1,1} &  &  &  \\
a_{2,0} & a_{2,1} & a_{2,2} &  &  \\
& & \cdots  & & \ddots   \\
\end{array}
\right]
\left[\begin{array}{cc}
1 & s_0\\
0 & t_1\\
0 & 0\\
\vdots & \vdots\\
\end{array}
\right].
\end{eqnarray}
Clearly, the second matrix in the right hand side of (\ref{t2-dec}) is TP$_2$ since $s_0$ and $t_1$ are nonnegative.
If $A$ is TP$_2$, then so is the matrix in the left hand side of (\ref{t2-dec}),
which is equivalent to the log-convexity of the sequence $(a_{n,0})_{n\ge 0}$.
This completes the proof.
\end{proof}

So we may focus our attention on the total positivity of tridiagonal matrices.
We first give two simple applications of Theorem \ref{basic-thm} from this point of view.

\begin{coro}[{\cite[Theorem 3.1]{Zhu13}}]
If $s_{k-1}s_{k}\ge r_kt_k$ for $k\ge 1$,
then the sequence $(a_{n,0})_{n\ge 0}$ of Catalan-like numbers defined by (\ref{rst-eq}) is log-convex.
\end{coro}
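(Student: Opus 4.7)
The plan is to reduce the corollary to a tridiagonal total positivity statement and apply Theorem~\ref{basic-thm}. By Theorem~\ref{basic-thm}(ii), the log-convexity of $(a_{n,0})_{n\ge 0}$ follows once we know that the recursive matrix $A$ is TP$_2$. By Theorem~\ref{basic-thm}(i), it suffices in turn to show that under the hypothesis $s_{k-1}s_k\ge r_kt_k$, the coefficient (Jacobi) matrix $J$ defined in (\ref{J-eq}) is TP$_2$.

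So the task reduces to verifying the nonnegativity of every $2\times 2$ minor of $J$. I would fix indices $i<j$ and $p<q$ and analyse the minor $\det\bigl[\begin{smallmatrix} J_{i,p} & J_{i,q} \\ J_{j,p} & J_{j,q}\end{smallmatrix}\bigr]$ by cases, exploiting the fact that $J_{a,b}=0$ whenever $|a-b|\ge 2$. For the minor to be nonzero we need either both $J_{i,p},J_{j,q}$ or both $J_{i,q},J_{j,p}$ to be nonzero, and each constraint forces a band condition $|a-b|\le 1$. A short case check then shows that the possible nonzero patterns split into two kinds: those giving a minor equal to a product of nonnegative entries of $J$ (such as $t_it_{i+1}$, $r_{i+1}r_{i+2}$, $s_ir_{i+2}$, $t_is_{i+1}$, and so on), which are automatically nonnegative; and the single genuinely nontrivial family corresponding to consecutive rows and columns along the diagonal,
$$\det\!\begin{pmatrix} s_{k-1} & r_k \\ t_k & s_k\end{pmatrix}=s_{k-1}s_k-r_kt_k,$$
which is nonnegative precisely by the hypothesis.

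Once each minor has been classified into one of these types, TP$_2$ of $J$ follows, and the chain $J\ \text{TP}_2\Rightarrow A\ \text{TP}_2\Rightarrow (a_{n,0})\ \text{log-convex}$ closes the proof.

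I do not expect a serious obstacle here: the only real content is the hypothesis $s_{k-1}s_k\ge r_kt_k$, which is exactly what is needed for the single nontrivial family of minors. The mildly tedious step is enumerating the index configurations $(i,j,p,q)$ compatible with the tridiagonal support, but this is a finite check with no analytic difficulty. The main conceptual point of the proof is the observation that in a nonnegative tridiagonal matrix, TP$_2$ is equivalent to nonnegativity of the $2\times 2$ diagonal-adjacent minors, so Zhu's condition is exactly the TP$_2$ condition on $J$.
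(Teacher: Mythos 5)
Your proposal is correct and follows exactly the paper's route: the paper likewise deduces the corollary by observing that the hypothesis makes $J$ TP$_2$ and then invoking Theorem~\ref{basic-thm}(i) and (ii). The only difference is that you explicitly carry out the (routine but correct) verification that for a nonnegative tridiagonal matrix the only nontrivial $2\times 2$ minors are the consecutive diagonal ones $s_{k-1}s_k-r_kt_k$, a step the paper leaves implicit.
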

\begin{proof}
If $s_{k-1}s_{k}\ge r_kt_k$ for $k\ge 1$, then $J$ is TP$_2$, and so is $A$ by Theorem \ref{basic-thm}~(i).
Thus $(a_{n,0})_{n\ge 0}$ is log-convex by Theorem \ref{basic-thm}~(ii).
\end{proof}

\begin{coro}\label{rs-thm}
Let $A=[a_{n,k}]_{n,k\ge 0}$ be a recursive matrix defined by
\begin{equation}\label{rs-eq}
a_{0,0}=1,\quad a_{n+1,k}=r_ka_{n,k-1}+s_ka_{n,k}.
\end{equation}
If $r_k$ and $s_k$ are nonnegative,
then $A$ is TP.
\end{coro}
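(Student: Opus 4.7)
The plan is to invoke Theorem \ref{basic-thm}(i), which reduces total positivity of $A$ to total positivity of its coefficient matrix $J$. Under the hypothesis of (\ref{rs-eq}) we have $t_k\equiv 0$, so the Jacobi matrix (\ref{J-eq}) degenerates to an upper bidiagonal matrix whose main diagonal carries the nonnegative entries $s_0,s_1,\ldots$ and whose superdiagonal carries the nonnegative entries $r_1,r_2,\ldots$. Hence the whole corollary comes down to showing that every nonnegative upper bidiagonal matrix is TP.

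To prove this I would fix any $p\times p$ submatrix on rows $i_1<\cdots<i_p$ and columns $k_1<\cdots<k_p$ and evaluate its determinant directly from the Leibniz formula. Because $J_{i,j}\neq 0$ forces $j\in\{i,i+1\}$, a permutation $\sigma$ contributes a nonzero term only when $k_{\sigma(a)}\in\{i_a,i_a+1\}$ for every $a$. This condition actually pins $\sigma$ down: the chain of inequalities $k_{\sigma(a)}\le i_a+1\le i_{a+1}\le k_{\sigma(a+1)}$, together with strict monotonicity of $(k_b)$, gives $\sigma(a)<\sigma(a+1)$, so $\sigma$ must be the identity. The minor therefore collapses to the single product $\prod_{a=1}^{p}J_{i_a,k_a}$, which is either $0$ or a product of nonnegative entries of $J$, and in either case nonnegative.

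Consequently every minor of $J$ is nonnegative, i.e., $J$ is TP, and Theorem \ref{basic-thm}(i) then yields that $A$ is TP. The only point that calls for any care is the monotonicity argument ruling out nontrivial $\sigma$; once that observation is in hand, the rest is immediate. I do not expect any real obstacle beyond this—nonnegative bidiagonal matrices are essentially the simplest TP building blocks, and Theorem \ref{basic-thm}(i) does all the remaining work.
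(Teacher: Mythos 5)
Your proposal is correct and follows the paper's own route: the paper likewise observes that with $t_k\equiv 0$ the coefficient matrix $J$ is a nonnegative bidiagonal matrix, declares it ``obviously TP,'' and concludes by Theorem \ref{basic-thm}(i). Your Leibniz-formula argument simply fills in the detail the paper treats as obvious, and it is sound (the chain $k_{\sigma(a)}\le i_a+1\le i_{a+1}\le k_{\sigma(a+1)}$ together with distinctness of the $k_b$ does force $\sigma$ to be the identity).
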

\begin{proof}
In this case, the coefficient matrix is a bidiagonal matrix,
which is obviously TP,
and so is the recursive matrix by Theorem \ref{basic-thm}~(i).
\end{proof}
\begin{rem}
An immediate consequence of Corollary \ref{rs-thm} is Zhu's result~\cite[Theorem 2.1]{Zhu14},
which states that if $r_k,s_k$ are nonnegative quadratic polynomials in $k$,
then the matrix $[a_{n,k}]_{n,k\ge 0}$ defined by (\ref{rs-eq}) is TP.
In particular,
the Pascal triangle and the Stirling triangle are TP.
\end{rem}

There are many well-known results about the total positivity of tridiagonal matrices.
The following is one of them.

\begin{lem}[{\cite[Theorem 4.3]{Pin10}}]\label{pn>0}
A finite nonnegative tridiagonal matrix is TP if and only if
all its principal minors containing consecutive rows and columns are nonnegative.
\end{lem}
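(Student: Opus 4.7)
The ``only if'' direction of the lemma is immediate: any principal minor along consecutive rows and columns is itself a minor, hence nonnegative whenever the matrix is TP. The substance lies in the converse, which I would prove by showing that every minor $\det T[I;J]$ of a nonnegative tridiagonal matrix $T = (t_{ij})$, with $I = \{i_1 < \ldots < i_p\}$ and $J = \{j_1 < \ldots < j_p\}$, factors as a product of two kinds of quantities---entries of $T$ and principal minors of $T$ on consecutive rows and columns---each of which is nonnegative under the hypothesis. The first step is a vanishing criterion: if $|i_k - j_k| \ge 2$ for some $k$, then $\det T[I;J] = 0$. Indeed, if say $i_k \le j_k - 2$, then for every pair $(r,s)$ with $r \le k$ and $s \ge k$ we have $j_s - i_r \ge j_k - i_k \ge 2$, so the submatrix entry $T[i_r, j_s]$ vanishes. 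This creates a zero block of size $k \times (p-k+1)$, and since $k + (p-k+1) > p$ the Frobenius block-zero criterion forces the determinant to vanish. I may therefore assume $|i_k - j_k| \le 1$ for every $k$.

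The second step reduces to the case where $I \cup J$ is an interval of consecutive integers. If some $m \notin I \cup J$ lies strictly between elements of $I \cup J$, then the tridiagonal shape forces every entry of $T[I;J]$ joining a row/column index below $m$ to one above to vanish; thus $T[I;J]$ is block diagonal and $\det T[I;J]$ splits as the product of two smaller minors of the same shape. These smaller minors are handled by induction on the size of $T$.

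For the remaining case---$I \cup J$ an interval and $|i_k - j_k| \le 1$ throughout---I would proceed by induction on $p$. Expanding along the first row or column of the submatrix, the tridiagonal shape leaves at most two nonzero entries there, and a short case analysis peels off either a single entry of $T$ or a $2 \times 2$ principal minor on consecutive rows and columns, leaving a minor of the same shape on a smaller submatrix. Iterating this decomposition writes $\det T[I;J]$ as a product of entries and principal minors on consecutive rows and columns, all of which are nonnegative by hypothesis. The main obstacle in executing the plan is the bookkeeping around Laplace-expansion signs at each peel and the verification that the residual minor continues to satisfy the inductive hypotheses; the near-diagonal structure of $T$ makes these checks routine case by case, but they are the place where the argument is not entirely automatic.
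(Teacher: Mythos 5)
The paper offers no proof of this lemma at all --- it is quoted from Pinkus's book --- so there is no in-paper argument to measure yours against; I can only judge the proposal on its own. Your overall strategy is the standard one: kill the minors with $|i_k-j_k|\ge 2$, split $\det T[I;J]$ at the gaps of $I\cup J$, and factor what remains into off-diagonal entries and contiguous principal minors. Steps 1 and 2 are correct (in step 2 you should remark that, thanks to step 1, the number of row indices below the gap $m$ equals the number of column indices below $m$, so the submatrix really is block diagonal rather than merely containing a large zero block).

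The gap is in step 3. When $i_1=j_1=c$ and $i_2=j_2=c+1$, the first row of $T[I;J]$ is $(t_{c,c},\,t_{c,c+1},\,0,\dots)$ and the first column is $(t_{c,c},\,t_{c+1,c},\,0,\dots)^{T}$; expanding along either one yields a \emph{sum} of two terms (the usual three-term recurrence for tridiagonal determinants), not a product, so nothing of size $1$ or $2$ ``peels off'' there. More fundamentally, the factorization cannot consist only of entries and $2\times 2$ contiguous principal minors: if it did, the hypothesis of the lemma could be weakened to contiguous principal minors of order $\le 2$, and the matrix $\bigl[\begin{smallmatrix}1&1&0\\ 1&1&1\\ 0&1&1\end{smallmatrix}\bigr]$ (determinant $-1$, yet every contiguous principal minor of order $\le 2$ nonnegative) shows that weakening is false. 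The correct peeling point is the first index $\ell+1$ with $i_{\ell+1}\ne j_{\ell+1}$ (if there is none, $I=J$ is an interval and the minor is itself a contiguous principal minor, so you are done). After steps 1--2 one necessarily has $i_{\ell+1}=c+\ell$ and $j_{\ell+1}=c+\ell+1$ (or the transposed situation); then rows $c,\dots,c+\ell$ of $T[I;J]$ are supported on its first $\ell+1$ columns, and the resulting block-triangular structure gives
$\det T[I;J]=t_{c+\ell,c+\ell+1}\cdot\det T[\{c,\dots,c+\ell-1\};\{c,\dots,c+\ell-1\}]\cdot\det T[I'';J'']$,
i.e.\ an off-diagonal entry times a contiguous principal minor of order $\ell$ (of arbitrary order, not just $2$) times a smaller minor of the same type. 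With that correction the induction closes and the proof is the classical one.
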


Actually, it is also known that
an irreducible nonnegative tridiagonal matrix is TP if and only if all its leading principal minors are positive
\cite[Example 2.2]{Min88}.

We next consider the problem in which case a tridiagonal matrix has nonnegative determinant.
Let $M=[m_{ij}]_{1\le i,j\le n}$ be a real $n\times n$ matrix.
We say that $M$ is {\it row diagonally dominant} if
\begin{equation}\label{dd-ineq}
m_{ii}\ge |m_{i,1}|+\cdots+|m_{i,i-1}|+|m_{i,i+1}|+\cdots+|m_{i,n}|,\quad i=1,2,\ldots,n.
\end{equation}
If all inequalities in (\ref{dd-ineq}) are strict, then we say that $M$ is {\it strictly row diagonally dominant}.
It is well known \cite{Tau49} that
if $M$ is strictly row diagonally dominant, then $|M|>0$.
Moreover, if $M$ is irreducible row diagonally dominant and
there is at least one strict inequality in (\ref{dd-ineq}), then $|M|>0$.
The case for nonnegative tridiagonal matrices is simpler.

\begin{lem}\label{Jn>0}
Let
\begin{equation*}\label{Jn-mat}
J_n=\left[\begin{array}{cccccc}
y_0 & x_1 &  &  &  &\\
z_1 & y_1 & x_2 & &\\
 & z_2 & y_2 & x_3 &  &\\
&  &\ddots & \ddots  &  \ddots &  \\
& & & z_{n-1} & y_{n-1} & x_n\\
&& & & z_n & y_n \\
\end{array}\right],
\end{equation*}
where $x_k,y_k,z_k$ are all nonnegative.
\begin{itemize}
  \item [\rm (i)] If $J_n$ is row diagonally dominant, then $|J_n|\ge 0$.
  \item [\rm (ii)] If $J_n$ is column diagonally dominant, then $|J_n|\ge 0$.
\end{itemize}
\end{lem}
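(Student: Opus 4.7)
The plan is to reduce the problem to the classical three-term recurrence for the determinant of a tridiagonal matrix, and then to prove a strengthened statement by induction. Write $D_k$ for the determinant of the leading $(k+1)\times(k+1)$ principal submatrix of $J_n$, with the convention $D_{-1}=1$. Expansion along the last row gives the familiar recursion
$$D_k = y_k D_{k-1} - x_k z_k D_{k-2},\qquad 1\le k\le n,$$
together with $D_0=y_0$.

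For part (i), I would prove by induction on $k$ the sharpened inequality
$$D_k \ge x_{k+1} D_{k-1}\ge 0 \qquad (0\le k\le n-1),$$
and then read off $D_n\ge 0$ at the last step. The base case $k=0$ is precisely the row-dominance condition $y_0\ge x_1$. For the inductive step, the hypothesis $D_{k-1}\ge x_k D_{k-2}\ge 0$ together with $z_k\ge 0$ gives $x_k z_k D_{k-2}\le z_k D_{k-1}$, so
$$D_k = y_k D_{k-1} - x_k z_k D_{k-2}\ge (y_k - z_k)D_{k-1}.$$
If $k\le n-1$, row dominance at row $k$ reads $y_k\ge z_k+x_{k+1}$, which combined with $D_{k-1}\ge 0$ closes the induction. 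For $k=n$, the bottom-row condition $y_n\ge z_n$ yields $D_n\ge(y_n-z_n)D_{n-1}\ge 0$.

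Part (ii) I would obtain from (i) by transposition: $J_n^{\top}$ is again a nonnegative tridiagonal matrix with the sub- and superdiagonal entries $x_k$ and $z_k$ interchanged, and column dominance of $J_n$ is exactly row dominance of $J_n^{\top}$, so part (i) gives $|J_n|=|J_n^{\top}|\ge 0$.

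The main obstacle is choosing the right strengthening of the inductive hypothesis. The naive claim $D_k\ge 0$ will not propagate through the recurrence, because one must control the subtracted term $x_k z_k D_{k-2}$ against $y_k D_{k-1}$, and no bare nonnegativity bound on $D_{k-2}$ suffices. Carrying along the sharper estimate $D_{k-1}\ge x_k D_{k-2}$ is exactly what permits trading one factor of $x_k$ for $D_{k-1}$, at which point the diagonal-dominance inequality $y_k-z_k\ge x_{k+1}$ does the rest.
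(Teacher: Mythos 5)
Your proof is correct, but it takes a different route from the paper's. The paper proves part (i) by induction on the matrix size using determinant surgery: when $y_n=z_n$ it subtracts the last column from the second-to-last, expands along the last row, and observes that the resulting $(n-1)$-st order matrix (with corner entry $y_{n-1}-x_n$) is again row diagonally dominant; when $y_n>z_n$ it splits the last row by multilinearity into the equality case plus $(y_n-z_n)|J_{n-1}|$. You instead run the standard continuant recurrence $D_k=y_kD_{k-1}-x_kz_kD_{k-2}$ and carry the strengthened invariant $D_k\ge x_{k+1}D_{k-1}\ge 0$, which lets you absorb the subtracted term via $x_kz_kD_{k-2}\le z_kD_{k-1}$ and then invoke row dominance $y_k-z_k\ge x_{k+1}$ (or just $y_n\ge z_n$ at the final row). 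Each step checks out, including the base case $y_0\ge x_1$ and the transposition argument for (ii), which matches the paper's. What your version buys is uniformity: it is exactly the technique the paper itself uses to prove Theorem \ref{rst-thm} (there with the invariant $D_n\ge D_{n-1}\ge 1$ under $y_k\ge x_kz_k+1$), so the two total-positivity criteria come out of one recurrence argument with two different sharpened hypotheses. The paper's version is arguably more conceptual in that it exhibits the reduced matrix as again diagonally dominant, but it needs the two-case analysis that your invariant avoids.
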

\begin{proof}
(i)\quad
We proceed by induction on $n$.
Assume that $y_n=z_n$. Then
$$|J_n|
=\left|\begin{array}{ccccc}
y_0 & x_1 &  &  &  \\
z_1 & y_1 & x_2 & \\
 & \ddots & \ddots & \ddots &  \\
&  &z_{n-1} & y_{n-1}-x_n  &  x_n  \\
& & & 0 & y_n \\
\end{array}\right|
=y_n\left|\begin{array}{ccccc}
y_0 & x_1 &  & &\\
z_1 & y_1 & x_2 & &\\
& \ddots & \ddots & \ddots &\\
& & z_{n-2} & y_{n-2} & x_{n-1}\\
& & &z_{n-1} & y_{n-1}-x_n\\
\end{array}\right|.$$
Thus $|J_n|$ is nonnegative by the inductive hypothesis.
Assume that $y_n>z_n$. Then
$$|J_n|=\left|\begin{array}{ccccc}
y_0 & x_1 &  &  &  \\
z_1 & y_1 & x_2 & \\
 & \ddots & \ddots & \ddots &  \\
&  &z_{n-1} & y_{n-1} &  x_n  \\
& & & z_n & z_n \\
\end{array}\right|
+\left|\begin{array}{ccccc}
y_0 & x_1 &  &  &  \\
z_1 & y_1 & x_2 & \\
 & \ddots & \ddots & \ddots &  \\
&  &z_{n-1} & y_{n-1} &  x_n  \\
& & & 0 & y_n-z_n \\
\end{array}\right|.$$
Clearly, two determinants on the right hand side are nonnegative,
so is $|J_n|$.

(ii)\quad
Apply (i) to the transpose $J_n^T$ of $J_n$.
\end{proof}

Combining Theorem \ref{basic-thm}~(i) and Lemma~\ref{Jn>0} we obtain the following criterion.

\begin{thm}\label{dd-thm}
Let $A$ be the recursive matrix defined by (\ref{rst-eq}).
\begin{itemize}
  \item [\rm (i)]
  If $s_0\ge r_1$ and $s_k\ge r_{k+1}+t_{k}$ for $k\ge 1$, then $A$ is TP.
  \item [\rm (ii)]
  If $s_0\ge t_1$ and $s_k\ge r_k+t_{k+1}$ for $k\ge 1$, then $A$ is TP.
\end{itemize}
\end{thm}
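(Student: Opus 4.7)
The plan is to reduce everything to a statement about the coefficient matrix $J$ and then invoke the diagonal dominance lemma just proved.

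First, by Theorem~\ref{basic-thm}(i) it suffices to show that the tridiagonal coefficient matrix $J$ is TP; by Lemma~\ref{lps-lem} this reduces further to showing that every leading principal submatrix $J_n$ (in the notation of the proof of Theorem~\ref{basic-thm}) is TP. Since $J_n$ is a finite nonnegative tridiagonal matrix, Lemma~\ref{pn>0} says $J_n$ is TP exactly when every principal minor of $J_n$ indexed by a block of \emph{consecutive} rows and columns $\{i,i+1,\dots,j\}$ is nonnegative. So the whole theorem comes down to verifying that each such consecutive principal submatrix has nonnegative determinant.

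For part (i), the hypothesis $s_0\ge r_1$ and $s_k\ge r_{k+1}+t_k$ for $k\ge1$ is precisely the statement that $J$ (and hence every $J_n$) is row diagonally dominant. The key observation is that this row dominance \emph{is inherited} by any consecutive principal submatrix $M=J_n[i..j,\,i..j]$. Indeed, the interior rows of $M$ are unchanged, so their dominance is immediate. The first row of $M$ is row $i$ of $J$ with its subdiagonal entry $t_i$ (if any) dropped, so dominance there requires $s_i\ge r_{i+1}$, which follows from $s_i\ge r_{i+1}+t_i$. The last row of $M$ is row $j$ of $J$ with its superdiagonal entry $r_{j+1}$ dropped, so dominance requires $s_j\ge t_j$, which again follows from the hypothesis. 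Therefore $M$ is a nonnegative row diagonally dominant tridiagonal matrix, and by Lemma~\ref{Jn>0}(i) we conclude $|M|\ge 0$. Part (ii) is entirely parallel: the hypothesis translates to column diagonal dominance of $J$, which is inherited by consecutive principal submatrices for the analogous reason (a dropped entry only weakens the dominance inequality), and Lemma~\ref{Jn>0}(ii) yields $|M|\ge 0$.

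There is no serious obstacle; the one delicate point worth stating carefully is the inheritance of diagonal dominance under passage to a consecutive principal submatrix, which could fail if the dominance were stated in non-tridiagonal form or if off-diagonal entries could change sign. Here the tridiagonal structure plus nonnegativity make the inheritance automatic, and the proof is essentially a one-line invocation of Theorem~\ref{basic-thm}(i), Lemma~\ref{lps-lem}, Lemma~\ref{pn>0}, and Lemma~\ref{Jn>0}.
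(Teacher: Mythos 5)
Your proof is correct and takes essentially the same route the paper intends: the paper offers no written proof of Theorem~\ref{dd-thm} beyond the remark that it follows by ``combining Theorem~\ref{basic-thm}~(i) and Lemma~\ref{Jn>0},'' and your argument is exactly that combination. You have in fact supplied the one step the paper leaves implicit---the reduction via Lemma~\ref{pn>0} to consecutive principal minors together with the observation that row (resp.\ column) diagonal dominance is inherited by consecutive principal submatrices of a nonnegative tridiagonal matrix---so your write-up is, if anything, more complete than the original.
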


\begin{thm}\label{rst-thm}
Let $A$ be the recursive matrix defined by (\ref{rst-eq}).
If $s_0\ge 1$ and $s_k\ge r_kt_k+1$ for $k\ge 1$,
then $A$ is TP.
\end{thm}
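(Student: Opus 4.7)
By Theorem~\ref{basic-thm}(i), it suffices to prove that the coefficient matrix $J$ in (\ref{J-eq}) is TP. My plan is to exhibit an LU-type factorization
$$J = LU,$$
where $L$ is a nonnegative lower bidiagonal matrix with $1$'s on the diagonal and $U$ is a nonnegative upper bidiagonal matrix. Once this is done, $L$ and $U$ are each TP (any nonnegative bidiagonal matrix is TP, since any minor is either zero or a product of nonnegative entries), and Lemma~\ref{prod-lem} (applied to each leading principal submatrix and then passing to the limit via Lemma~\ref{lps-lem}) gives that $J$ is TP.

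Concretely, I would look for
$$L=\left[\begin{array}{cccc} 1 & & & \\ \alpha_1 & 1 & & \\ & \alpha_2 & 1 & \\ & & \ddots & \ddots \end{array}\right],\qquad
U=\left[\begin{array}{cccc} \beta_0 & r_1 & & \\ & \beta_1 & r_2 & \\ & & \beta_2 & \ddots \\ & & & \ddots \end{array}\right].$$
Matching entries of $LU$ with those of $J$ forces
$$\beta_0=s_0,\qquad \alpha_k=\frac{t_k}{\beta_{k-1}},\qquad \beta_k=s_k-\frac{r_k t_k}{\beta_{k-1}}\quad (k\ge 1).$$
The essential point, and the place where the hypothesis is used, is to verify that this recursion stays inside the nonnegative regime. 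I would prove by induction that $\beta_k\ge 1$ for every $k\ge 0$. Indeed, $\beta_0=s_0\ge 1$ by hypothesis. Assuming $\beta_{k-1}\ge 1$, then $r_kt_k/\beta_{k-1}\le r_kt_k$, whence
$$\beta_k = s_k-\frac{r_kt_k}{\beta_{k-1}}\ge s_k-r_kt_k\ge 1,$$
using $s_k\ge r_kt_k+1$. In particular $\beta_k>0$, so $\alpha_k=t_k/\beta_{k-1}\ge 0$, and both $L$ and $U$ are nonnegative bidiagonal.

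The only subtlety I anticipate is the one just resolved: the condition $s_k\ge r_kt_k+1$ is tailored exactly so that the pivot $\beta_k$ never collapses below $1$; a weaker hypothesis such as $s_k\ge r_kt_k$ would allow the $\beta_k$ to shrink under iteration and potentially become $0$ or negative. With the pivot bound in hand, the factorization $J=LU$ is well defined at every leading principal submatrix (the factorization respects the truncation to the first $n+1$ rows and columns), so finite-dimensional Cauchy--Binet applies and yields the total positivity of every $J_n$, hence of $J$, and finally of $A$.
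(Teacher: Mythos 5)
Your proof is correct, but it takes a genuinely different route from the paper. The paper also reduces to showing that the coefficient matrix $J$ is TP via Theorem~\ref{basic-thm}(i), but from there it invokes Lemma~\ref{pn>0} (the criterion that a nonnegative tridiagonal matrix is TP once all its principal minors on consecutive rows and columns are nonnegative) and then proves, by induction on the three-term recurrence $D_n=y_nD_{n-1}-x_nz_nD_{n-2}$, that the leading principal minors satisfy $D_n\ge D_{n-1}\ge 1$. You instead exhibit an explicit bidiagonal factorization $J=LU$ and show the pivots satisfy $\beta_k\ge 1$; since nonnegative bidiagonal matrices are TP, Cauchy--Binet finishes the job. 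The two arguments are numerically the same induction in disguise --- your pivots are exactly the ratios $\beta_k=D_k/D_{k-1}$ of consecutive leading principal minors, so $\beta_k\ge 1$ is equivalent to the paper's $D_k\ge D_{k-1}\ge 1$ --- but the structural scaffolding differs. Your version buys self-containedness: it does not need the quoted tridiagonal TP criterion from Pinkus, only the elementary fact that a nonnegative bidiagonal matrix is TP (every minor being zero or a single product of entries) plus Lemmas~\ref{lps-lem} and~\ref{prod-lem}, and it makes transparent why the constant $1$ in the hypothesis $s_k\ge r_kt_k+1$ is the right threshold for the pivots not to degenerate. The paper's version is shorter given that Lemma~\ref{pn>0} is already on the table, and it avoids having to justify that the factorization commutes with truncation to leading principal submatrices (a point you correctly address, since $L$ and $U$ are bidiagonal). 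Both are complete proofs.
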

\begin{proof}
By Theorem~\ref{basic-thm},
we need to show that the corresponding coefficient matrix $J$ is TP.
By Lemma~\ref{pn>0},
it suffices to show that the tridiagonal matrix of form
\begin{equation}\label{Dn-det}
J_n=\left[
\begin{array}{ccccc}
y_0 & x_1 &  &  &  \\
z_1 & y_1 & x_2 & \\
 & z_2 & y_2 & \ddots &  \\
&  &\ddots & \ddots  &  x_n  \\
& & & z_n & y_n \\
\end{array}
\right]
\end{equation}
has nonnegative determinant if $y_0\ge 1$ and $y_k\ge x_kz_k+1$ for $1\le k\le n$.
Denote $D_{-1}:=1, D_0=y_0$ and $D_n=|J_n|$ for $n\ge 1$.
We show that $D_n\ge D_{n-1}\ge 1$ by induction on $n$.
Assume that $D_{n-1}\ge D_{n-2}\ge 1$.
Note that
$$D_n=y_nD_{n-1}-x_nz_nD_{n-2}$$
by expanding the determinant (\ref{Dn-det}) along the last row or column.
Hence $$D_n\ge y_nD_{n-1}-x_nz_nD_{n-1}=(y_n-x_nz_n)D_{n-1}\ge D_{n-1}\ge 1,$$
as desired.
Thus $J$ is TP, and so is $A$.
\end{proof}

Finally, we apply Theorem \ref{rst-thm} to two particularly interesting classes of recursive matrices,
which are introduced by Aigner in \cite{Aig99} and \cite{Aig01} respectively.
Many well-known combinatorial triangles are of such recursive matrices
(we refer the reader to Aigner~\cite{Aig99,Aig01} for more information).
The motivation of this paper is to study the total positivity of these combinatorial triangles.

\begin{coro}\label{s-thm}
Let $A=[a_{n,k}]_{n,k\ge 0}$ be an admissible matrix defined by
$$a_{0,0}=1,\quad a_{n+1,k}=a_{n,k-1}+s_ka_{n,k}+a_{n,k+1}.$$
If $s_0\ge 1$ and $s_k\ge 2$ for $k\ge 1$,
then $A$ is TP.
\end{coro}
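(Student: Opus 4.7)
The plan is to observe that this corollary is simply the specialization of Theorem~\ref{rst-thm} to the case $r_k=t_k=1$ for all $k\ge 1$. Under this specialization, the hypothesis ``$s_k\ge r_kt_k+1$ for $k\ge 1$'' of Theorem~\ref{rst-thm} becomes ``$s_k\ge 2$ for $k\ge 1$,'' and the hypothesis ``$s_0\ge 1$'' is already stated verbatim.

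Concretely, I would first note that the recurrence in the statement is exactly the recurrence (\ref{rst-eq}) with $r_k=1$ and $t_k=1$, so the associated coefficient matrix $J$ has $1$'s on both the sub- and super-diagonals and $s_0,s_1,s_2,\ldots$ on the main diagonal. Then I would invoke Theorem~\ref{rst-thm} directly: since $s_0\ge 1$ and $s_k\ge 2=r_kt_k+1$ for all $k\ge 1$, the theorem applies and $A$ is TP.

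Since the work is really done inside Theorem~\ref{rst-thm}, there is no genuine obstacle here; the only thing to be careful about is matching the indexing conventions (the hypothesis of Theorem~\ref{rst-thm} on $s_0$ versus on $s_k$ for $k\ge 1$ aligns with the slightly special recurrence for the $0$-th column in (\ref{rst-eq})). The proof in the paper will therefore consist of a single sentence citing Theorem~\ref{rst-thm}.
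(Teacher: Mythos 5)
Your proposal is correct and matches the paper exactly: the paper presents this corollary as a direct application of Theorem~\ref{rst-thm} with $r_k=t_k=1$, under which $s_k\ge r_kt_k+1$ becomes $s_k\ge 2$, and gives no further argument. Nothing is missing.
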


\begin{coro}\label{st-thm}
Let $A=[a_{n,k}]_{n,k\ge 0}$ be a recursive matrix defined by
$$a_{0,0}=1,\quad a_{n+1,k}=a_{n,k-1}+s_ka_{n,k}+t_{k+1}a_{n,k+1}.$$
If $s_0\ge 1$ and $s_k\ge t_k+1$ for $k\ge 1$,
then $A$ is TP.
\end{coro}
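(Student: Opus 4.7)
The plan is to deduce this directly from Theorem \ref{rst-thm}. Setting $r_k = 1$ for all $k \geq 1$ in the recurrence of Theorem \ref{rst-thm} reduces the hypothesis $s_k \geq r_k t_k + 1$ to $s_k \geq t_k + 1$, while the condition $s_0 \geq 1$ is already identical. Thus the hypotheses of the corollary are exactly the specialization of the hypotheses of Theorem \ref{rst-thm} when all $r_k$ equal $1$.

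Concretely, I would first observe that the matrix $A$ in the corollary is of the form $A^{\pi,\sigma,\tau}$ with $\pi = (1,1,1,\ldots)$, $\sigma = (s_0, s_1, s_2, \ldots)$, and $\tau = (t_1, t_2, t_3, \ldots)$. The given bounds then yield $s_0 \geq 1 = r_1$ (trivially; but we use the stronger form $s_0 \geq 1$) and $s_k \geq t_k + 1 = r_k t_k + 1$ for every $k \geq 1$. These are precisely the assumptions of Theorem \ref{rst-thm}.

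Applying Theorem \ref{rst-thm} then gives that $A$ is TP, and the proof is complete in one line. There is essentially no obstacle to overcome here — the content of the corollary lies entirely in recognizing it as the $r_k \equiv 1$ specialization of the preceding theorem, and in pointing out that many combinatorial triangles (the Catalan triangles of Aigner and Shapiro, and the Bell triangle as read off from their coefficient matrices displayed earlier) fall under this hypothesis and are therefore totally positive.
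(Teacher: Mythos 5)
Your proposal is correct and matches the paper's intent exactly: the paper states this corollary as an immediate specialization of Theorem \ref{rst-thm} to the case $r_k\equiv 1$, under which $s_k\ge r_kt_k+1$ becomes $s_k\ge t_k+1$. Nothing further is needed.
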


\begin{coro}
The Bell triangle, the Catalan triangles of Aigner and Shapiro are TP respectively.
\end{coro}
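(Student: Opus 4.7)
The plan is straightforward verification: for each of the three triangles I would read off the parameters $(\pi,\sigma,\tau)$ from the displayed coefficient matrices that are already exhibited in the paper, and then check the hypotheses of Corollary~\ref{s-thm} or Corollary~\ref{st-thm}. So the proof will consist of three short calculations and an appeal to the corollaries, with no need to touch Theorem~\ref{basic-thm} or Theorem~\ref{rst-thm} directly.

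For the Aigner Catalan triangle $C$, the recurrence $C_{n+1,k}=C_{n,k-1}+2C_{n,k}+C_{n,k+1}$ (with $C_{n+1,0}=C_{n,0}+C_{n,1}$) gives $r_k=t_k=1$ for all $k\ge 1$, with $s_0=1$ and $s_k=2$ for $k\ge 1$. This is an admissible matrix in the sense of Corollary~\ref{s-thm}, and the inequalities $s_0\ge 1$ and $s_k\ge 2$ are satisfied trivially, so $C$ is TP. For the Shapiro Catalan triangle $B$, the recurrence gives the same $r_k=t_k=1$ and $s_k=2$ for $k\ge 1$, but now $s_0=2$; again an admissible matrix and Corollary~\ref{s-thm} applies verbatim.

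For the Bell triangle $X$, the recurrence $X_{n+1,k}=X_{n,k-1}+(k+1)X_{n,k}+(k+1)X_{n,k+1}$ gives $r_k=1$ for all $k\ge 1$, $s_k=k+1$ for all $k\ge 0$, and $t_{k+1}=k+1$, i.e.\ $t_k=k$ for $k\ge 1$. Then $s_0=1\ge 1$, and for $k\ge 1$ one has $s_k=k+1=t_k+1$, so the hypotheses of Corollary~\ref{st-thm} are met with equality, giving that $X$ is TP.

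There is essentially no obstacle here; the only small piece of care is in matching the indexing convention $a_{n+1,k}=r_k a_{n,k-1}+s_k a_{n,k}+t_{k+1}a_{n,k+1}$ so that the coefficient appearing in front of $a_{n,k+1}$ is recorded as $t_{k+1}$ rather than $t_k$. Once the Bell triangle is written in this form one sees immediately that $s_k-t_k=1$ for all $k\ge 1$, which is exactly the borderline case of Corollary~\ref{st-thm}. Thus all three triangles are TP.
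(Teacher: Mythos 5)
Your verification is correct and is exactly the (implicit) proof the paper intends: the corollary is stated as an immediate consequence of Corollaries \ref{s-thm} and \ref{st-thm}, and your parameter readings ($s_0=1$, $s_k=2$, $r_k=t_k=1$ for Aigner; $s_0=2$ otherwise the same for Shapiro; $r_k=1$, $s_k=k+1$, $t_k=k$ for Bell) match the coefficient matrices displayed in the paper. Nothing is missing.
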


\section{Concluding remarks and further work}
\hspace*{\parindent}

For two real polynomials $f(q)$ and $g(q)$ in $q$, denote $f(q)\ge_q g(q)$
if coefficients of the difference $f(q)-q(q)$ are all nonnegative.
Let $A(q)$ be an infinite matrix all whose elements are real polynomials in $q$.
It is called {\it $q$-TP} if its minors of all orders have nonnegative coefficients as polynomials in $q$.
Theorems \ref{basic-thm}, \ref{dd-thm} and \ref{rst-thm} can be carried over verbatim to their $q$-analogue.

\begin{thm}
Let $\pi=(r_k(q))_{k\ge 1}, \sigma=(s_k(q))_{k\ge 0}, \tau=(t_k(q))_{k\ge 1}$
be three sequences of polynomials in $q$ with nonnegative coefficients
and $A(q)=[a_{n,k}(q)]_{n,k\ge 0}$ be an infinite lower triangular matrix defined by
$$a_{0,0}(q)=1,\quad a_{n+1,k}(q)=r_{k}(q)a_{n,k-1}(q)+s_k(q)a_{n,k}(q)+t_{k+1}(q)a_{n,k+1}(q),$$
where $a_{n,k}(q)=0$ unless $n\ge k\ge 0$.
Then the $q$-recursive matrix $A(q)$ is $q$-TP if one of the following conditions holds:
\begin{itemize}
  \item [\rm (i)] $s_0(q)\ge_q r_1(q)$ and $s_k(q)\ge_q r_{k+1}(q)+t_{k}(q)$ for $k\ge 1$.
  \item [\rm (ii)] $s_0(q)\ge_q t_1(q)$ and $s_k(q)\ge_q r_k(q)+t_{k+1}(q)$ for $k\ge 1$.
  \item [\rm (iii)] $s_0(q)\ge_q 1$ and $s_k(q)\ge_q r_k(q)t_k(q)+1$ for $k\ge 1$.
\end{itemize}
\end{thm}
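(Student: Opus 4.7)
The plan is to mirror the proofs of Theorems \ref{basic-thm}(i), \ref{dd-thm}, and \ref{rst-thm} by replacing every numerical inequality $\ge$ with the coefficient-wise inequality $\ge_q$. The reason this works is that the relation $\ge_q$ is preserved by sums, and that the product of two polynomials with nonnegative coefficients again has nonnegative coefficients; hence $\ge_q$ is preserved by Cauchy--Binet expansions as well. Consequently the $q$-analogues of Lemma \ref{prod-lem} (the product of two $q$-TP matrices is $q$-TP), of Lemma \ref{pn>0} (a nonnegative tridiagonal $q$-matrix is $q$-TP iff all its principal minors containing consecutive rows and columns have $q$-nonnegative coefficients), and of Theorem \ref{basic-thm}(i) all go through verbatim: the identity $\overline{A}_{n+1}(q) = A_n(q) J_n(q)$ together with induction on $n$ reduces the $q$-total positivity of $A(q)$ to that of its tridiagonal coefficient matrix $J(q)$.

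For conditions (i) and (ii), I would imitate the proof of Theorem \ref{dd-thm} via a $q$-analogue of Lemma \ref{Jn>0}: if a tridiagonal $q$-matrix $J_n(q)$ with entries in $\mathbb{R}_{\ge_q 0}[q]$ is row $q$-diagonally dominant, then $|J_n(q)| \ge_q 0$. The scalar proof splits the last row via $y_n = (y_n - z_n) + z_n$ and then applies a column operation to one of the summands; here the same manipulation works and is in fact cleaner, because $y_n(q) - z_n(q) \ge_q 0$ is automatic and there is no need to separate the cases $y_n = z_n$ and $y_n > z_n$. This produces
$$|J_n(q)| = \bigl(y_n(q) - z_n(q)\bigr)\, D_{n-1}(q) + z_n(q)\, D'_{n-1}(q),$$
where $D_{n-1}(q)$ is the determinant of the leading $(n-1) \times (n-1)$ tridiagonal block of $J_n(q)$ and $D'_{n-1}(q)$ is obtained from it by replacing $y_{n-1}(q)$ with $y_{n-1}(q) - x_n(q)$; both blocks are again row $q$-diagonally dominant (for the latter one uses $y_{n-1}(q) - x_n(q) \ge_q z_{n-1}(q)$), so both determinants are $\ge_q 0$ by induction. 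Case (ii) then follows from case (i) applied to the transpose of $J(q)$.

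For condition (iii), I would follow the proof of Theorem \ref{rst-thm}. By the $q$-analogue of Lemma \ref{pn>0} it suffices to show that the determinant $D_n(q)$ of every consecutive tridiagonal principal section of $J(q)$ satisfies $D_n(q) \ge_q D_{n-1}(q) \ge_q 1$. Expanding along the last row yields the three-term recursion
$$D_n(q) = y_n(q)\, D_{n-1}(q) - x_n(q) z_n(q)\, D_{n-2}(q),$$
and combining the hypothesis $y_n(q) \ge_q x_n(q) z_n(q) + 1$ with the inductive bound $D_{n-1}(q) \ge_q D_{n-2}(q) \ge_q 0$ gives
$$D_n(q) \ge_q \bigl(y_n(q) - x_n(q) z_n(q)\bigr)\, D_{n-1}(q) \ge_q D_{n-1}(q) \ge_q 1,$$
closing the induction.

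The main obstacle is really only bookkeeping: one must verify that every step --- multilinear splitting of determinants, column operations, the three-term recursion, and Cauchy--Binet --- involves only sums and products of polynomials already known to be $\ge_q 0$, so that no cancellation between genuinely negative quantities is hidden. Since the scalar arguments already have this shape, no essentially new ideas are required for the $q$-analogue.
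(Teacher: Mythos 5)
Your proposal is correct and is essentially the paper's own argument: the paper gives no separate proof of this theorem, only the remark that Theorems \ref{basic-thm}, \ref{dd-thm} and \ref{rst-thm} carry over verbatim to the $q$-setting, and you have carried them over, correctly noting that Cauchy--Binet, the tridiagonal minor factorization behind Lemma \ref{pn>0}, and the three-term recursion $D_n=y_nD_{n-1}-x_nz_nD_{n-2}$ are all identities over $\mathbb{R}[q]$ involving only sums and products of $\ge_q 0$ quantities. Your multilinear splitting in the $q$-analogue of Lemma \ref{Jn>0} is also the right repair for the one step that does not literally transfer, since polynomials admit no trichotomy and the paper's case distinction $y_n=z_n$ versus $y_n>z_n$ has no $q$-counterpart.
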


There are other forms of recursive matrices.
For example, the Eulerian triangle
$$A=[A(n,k)]_{n,k\ge 1}
=\left[\begin{array}{rrrrrr}
1 &  &  &  &  &  \\
1 & 1 &   &   &   &   \\
1 & 4 & 1 &   &   &   \\
1 & 11 & 11 & 1 &   &   \\
1 & 26 & 66 & 26 & 1 &   \\
\vdots &  & &  &  & \ddots \\
\end{array}\right],$$
where $A(n,k)$ is the Eulerian number
and satisfies the recursive relation
$$A(n+1,k)=(n-k+2)A(n,k-1)+kA(n,k).$$
Brenti suggested the following.

\begin{conj}[{\cite[Conjecture 6.10]{Bre96}}]
The Eulerian triangle $A=[A(n,k)]_{n,k\ge 1}$ is TP.
\end{conj}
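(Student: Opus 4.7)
The plan is first to observe that the Eulerian recurrence $A(n+1,k) = (n-k+2) A(n,k-1) + k A(n,k)$ does not fit the framework (\ref{rst-eq}) of this paper: the multiplier $(n-k+2)$ depends on $n$ as well as on $k$, so there is no single tridiagonal coefficient matrix $J$ to which Theorem \ref{basic-thm} could be applied, and none of the criteria in Theorems \ref{basic-thm}, \ref{dd-thm}, \ref{rst-thm} nor their corollaries is directly available. A new mechanism is therefore required.

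My first strategy would be to look for an explicit factorization $A = B_1 B_2 \cdots B_m$ in which each factor is TP by the tools developed in this paper --- bidiagonal factors via Corollary \ref{rs-thm}, tridiagonal factors via Theorem \ref{rst-thm}, or Pascal/Stirling factors handled in the applications. A natural starting point is the Worpitzky-type identity
\[
k!\, S(n,k) = \sum_{j}\binom{n-j}{k-j}\, A(n,j),
\]
which reads as a matrix equation $D\, S = P\, A$, with $D$ a diagonal factorial matrix, $S$ the Stirling triangle, and $P$ a Pascal-like triangle, and hence as $A = P^{-1} D\, S$. Although $D$, $S$ and $P$ are individually TP, inverses of triangular TP matrices are generally not TP, so this na\"{\i}ve decomposition does not settle the conjecture; one would need to locate a genuinely positive factorization of $A$ by other means, perhaps by exploiting the Frobenius-type expression of $A_n(x)$ in terms of Stirling-type generators.

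A second, probably more robust, strategy is the Lindstr\"om--Gessel--Viennot (LGV) lemma: if one can realise $A(n,k)$ as the number of weighted paths $s_n \to t_k$ in a planar acyclic network with nonnegative edge weights, then every minor $\det[A(n_i,k_j)]$ counts families of non-intersecting paths and is automatically nonnegative. Descent-statistic encodings of the symmetric group, for instance Sulanke-type labeled lattice path models, are the natural candidates. The main obstacle, shared by both strategies, is precisely the $n$-dependence of the recurrence: it blocks the tridiagonal-factorisation route of this paper and complicates any attempt to propagate TP inductively from $A_n$ to $A_{n+1}$, since the incremental operator changes with $n$; in the LGV approach the corresponding difficulty is combinatorial, namely designing a single planar network whose path-count matrix reproduces the entire Eulerian triangle. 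Extracting such a network from the known combinatorial descriptions of Eulerian numbers is, in my view, the crux of the problem, and explains why the conjecture has remained open despite the abundance of machinery for real-rootedness and row-wise PF properties of the Eulerian polynomials.
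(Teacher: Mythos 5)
This statement is not proved in the paper at all: it is recorded as an open conjecture of Brenti, cited precisely because the Eulerian recurrence $A(n+1,k)=(n-k+2)A(n,k-1)+kA(n,k)$ has coefficients depending on $n$ and therefore falls outside the recursive-matrix framework of (\ref{rst-eq}). Your diagnosis of that obstruction is accurate and agrees with the paper's implicit reason for leaving the problem open. However, what you have written is not a proof but a research plan, and neither of its two branches is carried to completion. The Worpitzky route you abandon yourself, correctly, since $P^{-1}$ destroys total positivity; the Lindstr\"om--Gessel--Viennot route is only named, with the essential object --- a single planar acyclic network with nonnegative weights whose vertex-to-vertex path counts reproduce every entry $A(n,k)$ simultaneously, so that arbitrary minors become counts of non-intersecting path families --- explicitly left unconstructed. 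Without that network (or some other completed mechanism) no minor of $A$ has been shown nonnegative, so the conjecture remains exactly as open after your argument as before it.

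To be clear about the standard being applied: a correct submission here would either exhibit the LGV network and verify the path count, or produce a genuine nonnegative factorization of $A$ into TP factors (for instance a product of nonnegative bidiagonal matrices, which is the usual certificate for triangular TP matrices), or supply some entirely different argument. Your text does none of these; it identifies the crux and stops at it. That is a reasonable assessment of the problem's difficulty, but as a proof it has a gap that is not a technical oversight --- it is the entire content of the conjecture.
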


The Narayana triangle
$$N=\left[N(n,k)\right]_{n,k\ge 1}
=\left[\begin{array}{rrrrrr}
1 &  &  &  &  &  \\
1 & 1 &   &   &   &   \\
1 & 3 & 1 &   &   &   \\
1 & 6 & 6 & 1 &   &   \\
1 & 10 & 20 & 10 & 1 &   \\
\vdots &  & &  &  & \ddots \\
\end{array}\right],$$
where $N(n,k)=\frac{1}{k}\binom{n-1}{k-1}\binom{n}{k-1}$ is the Narayana number
and satisfies the recursive relation
$$N(n+1,k)=\frac{n(n+1)}{2k(k-1)}N(n,k-1)+\frac{n(n+1)}{2(n-k+1)(n-k+2)}N(n,k)$$
for $k\ge 2$.
Sometimes $N$ is called the Catalan triangle
since its row sum is precisely the Catalan number:
$$\sum_{k=1}^nN(n,k)=C_n.$$
We refer the reader to Sloane's OEIS~\cite[A001263]{OEIS} for more information about the Narayana triangle.
Here we propose the following conjecture.
\begin{conj}
The Narayana triangle $N=[N(n,k)]_{n,k\ge 1}$ is TP.
\end{conj}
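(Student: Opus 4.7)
The plan is to approach the Narayana triangle differently from the other combinatorial triangles in this paper, since the stated recurrence
$$N(n+1,k)=\frac{n(n+1)}{2k(k-1)}N(n,k-1)+\frac{n(n+1)}{2(n-k+1)(n-k+2)}N(n,k)$$
has coefficients depending on both $n$ and $k$, so it is not of the form (\ref{rst-eq}) with $n$-independent $r_k,s_k,t_k$, and Theorem~\ref{basic-thm}(i) is not directly applicable. Three avenues of attack present themselves: (A) find a totally positive factorization $N = M_1 M_2\cdots M_r$ and conclude by Lemma~\ref{prod-lem}; (B) exhibit a planar acyclic digraph so that the minors of $N$ become counts of non-intersecting lattice path systems via the Lindstr\"om--Gessel--Viennot lemma; or (C) find a three-term recurrence with $n$-independent coefficients in a different set of indices and apply Theorem~\ref{dd-thm} or Theorem~\ref{rst-thm}.

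For approach (A), the closed form $N(n,k)=\frac{1}{k}\binom{n-1}{k-1}\binom{n}{k-1}$ is a product of two Pascal-like entries, and so naturally invites a factorization through the Pascal triangle (which is TP). I would look for an identity of the shape $N=D_1\,P\,D_2\,P^T D_3$, or $N=M\,P$ with $M$ a known TP triangle, exploiting the Riordan-array description of $N$. If such a decomposition is found with each factor TP, Lemma~\ref{prod-lem} closes the argument.

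For approach (B), I would use the interpretation of $N(n,k)$ as the number of non-crossing partitions of $\{1,\ldots,n\}$ with $k$ blocks, or equivalently the number of Dyck paths of length $2n$ with $k$ peaks. The target is a planar acyclic digraph with ordered sources $u_n$ and ordered sinks $v_k$ for which the number of paths from $u_n$ to $v_k$ equals $N(n,k)$; the LGV lemma would then yield nonnegativity of every minor, and TP would follow by Lemma~\ref{lps-lem}. The intrinsic planarity of the non-crossing structures counted by $N(n,k)$ is strong evidence that such a network exists.

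The main obstacle is precisely that the Narayana triangle resists the tridiagonal framework developed here: its natural recurrence is genuinely two-index. Approach (C) therefore seems unlikely to succeed unless one first passes to the Narayana polynomials, or to a related generating series, and recovers a Jacobi-continued-fraction-style recursion; in any case the reduction to the paper's criteria would be indirect. Approach (A) depends on locating a clean factorization identity which is possible but not evidently standard. Approach (B) is the most principled and is where I would invest first; the hard step will be constructing the planar network that realizes $N$ as a path matrix in a way compatible with the ordering of sources and sinks required by LGV.
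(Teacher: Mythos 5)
The statement you are addressing is stated in the paper as a \emph{conjecture}: the authors offer no proof of it, and indeed the whole point of their final section is that the Narayana triangle (like the Eulerian triangle) falls outside the tridiagonal-coefficient-matrix framework of Theorems~\ref{basic-thm}, \ref{dd-thm} and \ref{rst-thm}, precisely for the reason you identify --- the natural recurrence for $N(n,k)$ has coefficients depending on both $n$ and $k$, so there is no single coefficient matrix $J$ with $\overline{N}=NJ$ to which Lemma~\ref{pn>0} or Lemma~\ref{Jn>0} could be applied. Your diagnosis of why approach (C) is unpromising is therefore accurate and matches the authors' implicit reasoning.

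However, what you have written is a research plan, not a proof, and the gap is total: none of the three avenues is carried out. For (A) you do not exhibit the factorization $N=M_1M_2\cdots M_r$, so Lemma~\ref{prod-lem} has nothing to act on; the closed form $\frac{1}{k}\binom{n-1}{k-1}\binom{n}{k-1}$ being a ``product of Pascal-like entries'' does not by itself yield a matrix factorization, since total positivity is not preserved under entrywise (Hadamard) products in general. For (B) you do not construct the planar network; the planarity of non-crossing partitions is suggestive but the Lindstr\"om--Gessel--Viennot lemma requires an explicit acyclic planar digraph with compatibly ordered sources and sinks whose path counts are exactly $N(n,k)$, and producing one is the entire content of a proof by that method. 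As it stands, your submission establishes nothing beyond what the paper already says by labelling the statement a conjecture; to turn either (A) or (B) into a proof you would need to supply the factorization identity or the network explicitly and verify it.
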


\section*{Acknowledgement}

This work was supported in part by the National Natural Science Foundation of China (Grant No. 11371078)
and the Specialized Research Fund for the Doctoral Program of Higher Education of China (Grant No. 20110041110039).


\end{document}